\documentclass[12 pt]{article}%
\usepackage{amsmath, amsfonts, amsthm, color,latexsym}
\usepackage{amsmath}
\usepackage{amsfonts}
\usepackage{amssymb}
\usepackage{color, soul}
\usepackage{graphicx}%
\usepackage{enumerate}
\usepackage[utf8]{inputenc}
\usepackage[T1]{fontenc}
\allowdisplaybreaks[4]
\providecommand{\U}[1]{\protect\rule{.1in}{.1in}}
\newtheorem{theorem}{Theorem}[section]
\newtheorem{proposition}[theorem]{Proposition}
\newtheorem{corollary}[theorem]{Corollary}
\newtheorem{example}[theorem]{Example}

\newtheorem{remark}[theorem]{Remark}

\newtheorem{lemma}[theorem]{Lemma}
\newtheorem{final remark}[theorem]{Final Remark}
\newtheorem{definition}[theorem]{Definition}
\begin{document}
	
	\title{\sc Demi Weakly Dunford-Pettis on Banach Spaces} 
\author{ Joilson Ribeiro\thanks{joilsonor@ufba.br} ~ Fabr\'icio Santos\thanks{fabricio.antonio@unir.br\thinspace \hfill\newline\indent2010 Mathematics Subject
		Classification: 46B42, 46B40, 46A40.\newline\indent Key words: Banach  spaces, Demi Dunford-Pettis, Banach Lattices, Weakly Demi Dunford Pettis.}}
\date{}
\maketitle

\begin{abstract} 
	
	In this paper, our main goal is to define the class of weakly Demi Dunford-Pettis applications. We also study their relationship with the classes of weakly Dunford-Pettis and Demi Dunford-Pettis operators, including a condition where these operators coincide with Demi Dunford-Pettis and Demicompacts. In the last section, we study some of the behavior of this class in the Banach Lattice environment.
	
\end{abstract}

\section{Introduction and background}

The Dunford-Pettis operators, whose concept can be found in \cite{AB85}, play a central role in the theory of measure spaces and also in Banach spaces in general, as we can see in \cite{DU77}. Recall this concept: Let X and Y be two Banach spaces. An operator $T\colon X \rightarrow Y$ is called a Dunford-Pettis
operator if T carries weakly convergent sequences onto norm convergent sequences. In other words, if $x_j \rightarrow 0$ as $j \rightarrow \infty$ in $\sigma(X; X')$, implies $\|T(x_j)\|_Y \rightarrow 0$, as $j \rightarrow \infty$. Alternatively: A (bounded linear) operator $T\colon X \rightarrow Y$ is a Dunford-Pettis operator if and only if T carries relatively weakly compact sets onto norm totally bounded sets. A Banach space $X$ is said to have the Schur property if $Id_{X}\colon X \rightarrow X$, given by $Id_{X}(x) = x$, is Dunford-Pettis. The Banach space $\ell_1$ has the Schur property (\cite[Theorem $2.5.18$]{MN91}).

The concept of Demi Dunford-Pettis operation was introduced by Benkhaled, Hajji and Jeribi in \cite{BHJ22} as a class that generalizes the concept of Dunford-Pettis operators. This class has been studied by several authors, such as ({\cite{BKA25}, \cite{BKAB24}, \cite{GU22}}). Recall this definition: Let $X$ a Banach space, an operator $T\colon X \rightarrow X$ is said to be Demi Dunford Pettis if, for every $(x_j)_{j=1}^{\infty}$ in $X$ such that $x_j \rightarrow 0$ in $\sigma(X; X')$ with $\|x_j - T(x_j)\| \rightarrow 0$ then $\|x_j\| \rightarrow 0$. It is easy to see that every Dunford-Pettis operator is a Demi-Dunford-Pettis operator, but the converse is generally not true, by example, let the operator $Id_{\ell_{\infty}} \colon \ell_{\infty} \rightarrow \ell_{\infty}$, given by, $Id_{\ell_{\infty}}((x_j)_{j=1}^{\infty}) = (x_j)_{j=1}^{\infty}$ called identity, the operator $-Id_{\ell_{\infty}}\colon \ell_{\infty} \rightarrow \ell_{\infty}$ is Demi Dunford-Pettis but it is not a Dunford-Pettis operator.

In this paper, inspired by the ideas of generalizations contained in the papers \cite{BEJ20, BHJ22, KO19, P66}, we present a new class that generalizes the concept of weakly Dunford-Pettis operators; we will call them weakly Demi Dunford-Pettis operators. An operator $T$ on a Banach space $X$ is said weakly Demi Dunford-Pettis (WDDP in short), if every sequences $(x_j)_{j=1}^{\infty}$ and $(f_j)_{j=1}^{\infty}$ in $X$ and $X'$, respectively, weakly convergente, that is, $x_j \rightarrow 0$ in $\sigma(X, X')$ and $f_j \rightarrow 0$ in $\sigma(X', X'')$ and $\|x_j - T(x_j)\| \rightarrow 0$ we have $|f_j(x_j)| \rightarrow 0$, as $j \rightarrow \infty$. Throughout this work, we study this class and its relationships with the classes of weakly Dunford-Pettis (see Proposition \ref{DwDP1}), Demi-Dunford-Pettis (see Proposition \ref{DDPWDDP}), and Demicompact operators. We also present concepts equivalent to the main definition ( see Proposition \ref{WDDPequi}), in order to establish various ways to verify when an operator on Banach spaces is weakly Demi-Dunford-Pettis. In the last section, we presented two results on the class of weakly Demi-Dunford-Pettis operators on Banach lattices related to the domination property of this class (see Theorem \ref{Desig} and Corollary \ref{Desig2}).

Throughout this text, $X$ will represent either a Banach space or a Banach lattice. In the especific moment, will be point out which of the two concepts will be used in the context. We will denote by $X'$ and $X''$ the topological dual and bidual of $X$. We also denote the positive cone of $X$ by $X^{+}$ Let $(x_j)_{j=1}^{\infty}$ in $X$, we indicate by $x_j \overset{w}{\rightarrow} x$ to mean that $x_j \rightarrow x$ in $\sigma(X, X')$, as $j \rightarrow \infty$.

To show our results we need to fix some notation and recall some definitions that will be used in this paper. A Banach lattice is a Banach space $(E, \|\cdot\|)$ such that $E$ is a vector lattice and its norm satisfies the following property: for each $x, y \in E$ such that $|x| \le |y|$, we have $\|x\| \le \|y\|$. The topological dual of a Banach lattice is also a Banach lattice, in other words, if $X$ is a Banach lattice, then $X'$ will also be a Banach lattice, endowed with the dual norm. We say that $X$ has the Dunford-Pettis property if $|f_j(x_j)| \rightarrow 0$, as $j \rightarrow \infty$ for every weakly null pair of sequences $((x_j)_{j=1}^{\infty}, (f_j)_{j=1}^{\infty})$ in $X \times X'$.

We will use the term operator $T\colon X \longrightarrow Y$ betwen two Banach spaces to mean a bounded linear mapping. An operator $T\colon X \longrightarrow Y$ betwen two Banach Lattices is said to be positive if $T(x) \ge 0$ in $Y$ whenever $x \ge 0$ in $X$. We write $S \le T$ if $(T - S)x \ge 0$ for every $x \in E$. We say that $S$ is dominated by $T$. It is well known that each positive linear mapping on Banach Lattice is continuous. For more details about Banach Lattices and positive opertors, we indicate the references \cite{AB85} and \cite{MN91}.


\section{Demi Weakly Dunford-Pettis Operators}

\begin{definition}\rm
	Let $X$ be a Banach space. An operator $T\colon X \longrightarrow X$ is said to be weakly demi Dunford–Pettis (WDDP in short), if for every sequence $(x_j)_{j=1}^{\infty}$ in $X$ and $(f_j)_{j=1}^{\infty}$ in $X'$ such that $x_j \overset{w}{\longrightarrow} 0$, $f_j \overset{w}{\longrightarrow} 0$ and $\|x_j -  T(x_j)\| \longrightarrow 0$ as $j\rightarrow \infty$, we have $|f_j(x_j)| \rightarrow 0$ as $j \rightarrow \infty$.
\end{definition}

The next result will present an equivalence for the concept of weakly Dunford-Pettis operators.

\begin{proposition}\label{WDDPequi}
	Let $X$ be a Banach space. An operator $T\colon X \longrightarrow X$ is weakly demi Dunford–Pettis if, and oly if, for all $(x_j)_{j=1}^{\infty}$ in $X$ and $(f_j)_{j=1}^{\infty}$ in $X'$ with $x_j \overset{w}{\rightarrow} x$, $f_j \overset{w}{\rightarrow} f$ and $(x_j - T(x_j)) \overset{\|\cdot\|}{\rightarrow} (x - T(x))$ then
	\begin{equation*}
		f_j(x_j) \overset{|\cdot|}{\rightarrow} f(x).
	\end{equation*}
\end{proposition}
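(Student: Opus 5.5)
The plan is a translation argument: reduce the general statement to the weakly null case in the definition by subtracting the limits. The two implications are handled separately.

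For the implication ``$\Leftarrow$'', take $x=0$ and $f=0$ in the hypothesis of Proposition \ref{WDDPequi}. Then $x-T(x)=0$, so the condition $(x_j-T(x_j))\overset{\|\cdot\|}{\rightarrow}(x-T(x))$ is exactly $\|x_j-T(x_j)\|\to 0$. The conclusion $f_j(x_j)\to f(x)=0$ is exactly $|f_j(x_j)|\to 0$. Hence $T$ is weakly demi Dunford--Pettis, and this direction needs no further work.

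For the implication ``$\Rightarrow$'', suppose $T$ is WDDP and let $x_j\overset{w}{\rightarrow}x$, $f_j\overset{w}{\rightarrow}f$, with $(x_j-T(x_j))\to(x-T(x))$ in norm. Put $y_j=x_j-x$ and $g_j=f_j-f$. Then $y_j\overset{w}{\rightarrow}0$ in $X$ and $g_j\overset{w}{\rightarrow}0$ in $X'$. By linearity of $T$,
\begin{equation*}
y_j-T(y_j)=\bigl(x_j-T(x_j)\bigr)-\bigl(x-T(x)\bigr)\longrightarrow 0
\end{equation*}
in norm. The definition of WDDP therefore gives $|g_j(y_j)|\to 0$. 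Next I expand
\begin{equation*}
f_j(x_j)-f(x)=g_j(y_j)+g_j(x)+f(y_j),
\end{equation*}
which holds because $f_j(x_j)=(g_j+f)(y_j+x)$. Each of the three terms tends to $0$. The first does so by the WDDP property. The third does so because $y_j\overset{w}{\rightarrow}0$ and $f\in X'$.

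The only point needing care is the middle term $g_j(x)$, since the weak convergence of $(g_j)$ is with respect to $\sigma(X',X'')$ and not the weak$^*$ topology. This is resolved by the canonical embedding $J\colon X\to X''$. We have $g_j(x)=J(x)(g_j)$ with $J(x)\in X''$, so $g_j(x)\to 0$ because $g_j\overset{w}{\rightarrow}0$ in $\sigma(X',X'')$. Put differently, weak convergence in $X'$ implies weak$^*$ convergence.

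Combining the three limits gives $f_j(x_j)\to f(x)$, which completes the proof.
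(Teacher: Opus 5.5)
Your proof is correct and follows the paper's argument: the same translation $y_j=x_j-x$, $g_j=f_j-f$, the same three-term decomposition $f_j(x_j)-f(x)=g_j(y_j)+g_j(x)+f(y_j)$, and the same specialization $x=0$, $f=0$ for the converse. Your explicit justification that $g_j(x)\to 0$ via the canonical embedding of $X$ into $X''$ fills in a step the paper leaves implicit.
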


\begin{proof}
("$\Rightarrow$") suppose that $T\colon X \longrightarrow X$ is weakly demi Dunford–Pettis operator and let $(x_j)_{j=1}^{\infty}$ in $X$ and $(f_j)_{j=1}^{\infty}$ in $X'$ with $x_j \overset{w}{\rightarrow} x$, $f_j \overset{w}{\rightarrow} f$ and $(x_j - T(x_j)) \overset{\|\cdot\|}{\rightarrow} (x - T(x))$. So, being $y_j = x_j - x$ and $g_j = f_j - f$ then $y_j \overset{w}{\rightarrow} 0$, $g_j \overset{w}{\rightarrow} 0$ and $(y_j - T(y_j)) \overset{\|\cdot\|}{\rightarrow} 0$. Therefore, $g_j(y_j) \overset{|\cdot|}{\rightarrow} 0$, then
\begin{equation*}
	(f_j - f)(x_j - x) \overset{|\cdot|}{\rightarrow} 0.
\end{equation*}
It's not hard to see that
\begin{equation*}
	f_j(x_j) - f(x) = (f_j - f)(x_j - x) + (f_j(x)-f(x))) + (f(x_j)- f(x))
\end{equation*}
and so,
\begin{equation*}
	|f_j(x_j) - f(x)| \le  |(f_j - f)(x_j - x)| + |(f_j(x)-f(x))| + |(f(x_j)- f(x))|.
\end{equation*}
Therefore, $f_j(x_j) \overset{|\cdot|}{\rightarrow} f(x)$.

("$\Leftarrow$") It is immediate.
\end{proof}

\begin{proposition}\label{DwDP1}
	Let $X$ be a Banach space. Every Weakly Dunford-Pettis operator $T\colon X \rightarrow X$ is Weakly Demi Dunford-Pettis.
\end{proposition}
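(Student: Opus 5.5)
The proof is a direct comparison of definitions. We use the standard notion: an operator $T\colon X \rightarrow X$ is \emph{weakly Dunford-Pettis} if $|f_j(T(x_j))| \rightarrow 0$ for every pair of sequences with $x_j \overset{w}{\rightarrow} 0$ in $X$ and $f_j \overset{w}{\rightarrow} 0$ in $X'$. The plan is to take sequences satisfying the hypotheses of the weakly demi Dunford--Pettis definition and split $f_j(x_j)$ into a term controlled by the weakly Dunford--Pettis property and a term controlled by the norm condition.

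So let $x_j \overset{w}{\rightarrow} 0$ in $X$ and $f_j \overset{w}{\rightarrow} 0$ in $X'$ with $\|x_j - T(x_j)\| \rightarrow 0$. Write
\begin{equation*}
f_j(x_j) = f_j(x_j - T(x_j)) + f_j(T(x_j)),
\end{equation*}
so that
\begin{equation*}
|f_j(x_j)| \le \|f_j\|\,\|x_j - T(x_j)\| + |f_j(T(x_j))|.
\end{equation*}

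The second term tends to $0$ directly, because $T$ is weakly Dunford--Pettis and both $(x_j)$ and $(f_j)$ are weakly null.

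For the first term, the only ingredient needed is that $(f_j)$ is norm bounded. This holds because a weakly convergent sequence in the Banach space $X'$ is bounded, by the uniform boundedness principle applied to $(f_j)$ viewed as elements of $X'''$ acting on $X''$. Setting $M=\sup_j\|f_j\|<\infty$, the first term is at most $M\|x_j - T(x_j)\|$, which tends to $0$ by hypothesis. Hence $|f_j(x_j)|\rightarrow 0$, so $T$ is weakly demi Dunford--Pettis.

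There is no real obstacle here; the one point to state explicitly is the boundedness of weakly convergent sequences, which is what makes the first term vanish.
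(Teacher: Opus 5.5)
Your proof is correct and follows the paper's argument: the same splitting $f_j(x_j)=f_j(x_j-T(x_j))+f_j(T(x_j))$, with the first term bounded by $\|f_j\|\,\|x_j-T(x_j)\|$ and the second killed by the weakly Dunford--Pettis property. You also state explicitly, via the uniform boundedness principle, that the weakly null sequence $(f_j)$ is norm bounded, which the paper uses without comment.
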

\begin{proof}
	Let $(x_j)_{j=1}^{\infty}$ be a sequence in $X$ and $(f_j)_{j=1}^{\infty}$ such that $x_j \overset{w}{\rightarrow} 0$, $f_j \overset{w}{\rightarrow} 0$ and $\|x_j-T(x_j)\| \rightarrow 0$. As,
	\begin{align*}
		|f_j(x_j)| \le \|f_j(x_j-T(x_j))\| + \|f_j(T(x_j))\| \le \|f_j\| \left(\|(x_j-T(x_j)\|\right) + \|f_j(T(x_j))\|
	\end{align*}
	for eny $j \in \mathbb{N}$, then follows that
	\begin{equation*}
		|f_j(x_j)| \rightarrow 0
	\end{equation*}
	when $j\rightarrow \infty$.
\end{proof}

It is important to note that the opposite path of proposition \ref{DwDP1} does not always happen, as we can see in the next example.

\begin{example}
	Consider the example
	\begin{equation*}
		Id_{\ell_2}\colon \ell_2 \rightarrow \ell_2
	\end{equation*}
	given by, $Id_{\ell_2}\left((x_j)_{j=1}^{\infty}\right) = \left(x_j\right)_{j=1}^{\infty}$. It's not difficult to see that $-Id_{\ell_2}$ is Weakly Demi Dunford-Pettis operator but when consider the sequence $(e_j)_{j=1}^{\infty}$ in $\ell_2$,  where $e_j$ is the sequence with the $j$th entry equals to $1$ and others are zero and $(f_j)_{j=1}^{\infty}$ on $\ell_2'$. It is canonical that $e_j \overset{w}{\longrightarrow} 0$ on $\ell_2$, and each $f_j$ can be associated to $e_j$. So,
	\begin{equation*}
		|f_j(-Id_{\ell_{\infty}}(e_j))| = |f_j(e_j)| = 1
	\end{equation*}
like this, $|(f_j(-Id_{\ell_2}(e_j)))_{j=1}^{\infty}|$ does not converge to zero. Then, $Id_{\ell_2}$ is not Weakly Dunford-Pettis.
	
\end{example}

The following result gives a necessary and sufficient condition under which each operator
is Weakly demi Dunford–Pettis.

\begin{theorem}\label{WDDP1}
	Let $X$ be a Banach space. The following statement are equivalent
	\begin{enumerate}
		\item[$(a)$] Every operator $T \colon X \longrightarrow X$ is Weakly Dunford-Pettis;
		\item[$(b)$] Every operator $T \colon X \longrightarrow X$ is Weakly Demi Dunford-Pettis;
		\item[$(c)$] The identity operator of $X$ is Weakly Demi Dunford–Pettis;
		\item[$(d)$] $X$ has the Dunford-Pettis property.
	\end{enumerate}
\end{theorem}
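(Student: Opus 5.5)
I will prove the cycle $(a)\Rightarrow(b)\Rightarrow(c)\Rightarrow(d)\Rightarrow(a)$. Throughout, ``weakly Dunford-Pettis'' means that $T$ satisfies $f_j(T(x_j))\to 0$ whenever $x_j\overset{w}{\rightarrow}0$ in $X$ and $f_j\overset{w}{\rightarrow}0$ in $X'$.

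The first two implications are formal. For $(a)\Rightarrow(b)$, apply Proposition \ref{DwDP1} to each operator. The implication $(b)\Rightarrow(c)$ is trivial, since $Id_X$ is one particular operator on $X$.

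For $(c)\Rightarrow(d)$, take weakly null sequences $(x_j)_{j=1}^{\infty}$ in $X$ and $(f_j)_{j=1}^{\infty}$ in $X'$. Since $x_j - Id_X(x_j)=0$ for every $j$, the hypothesis $\|x_j-Id_X(x_j)\|\rightarrow 0$ holds automatically. Because $Id_X$ is WDDP by $(c)$, we get $|f_j(x_j)|\rightarrow 0$. This is exactly the Dunford-Pettis property of $X$.

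For $(d)\Rightarrow(a)$, let $T\colon X\rightarrow X$ be any operator and take $x_j\overset{w}{\rightarrow}0$ and $f_j\overset{w}{\rightarrow}0$. Since $T$ is bounded, it is weak-to-weak continuous, so $T(x_j)\overset{w}{\rightarrow}0$. The Dunford-Pettis property, applied to the weakly null pair $((T(x_j))_{j=1}^{\infty},(f_j)_{j=1}^{\infty})$, then gives $f_j(T(x_j))\rightarrow 0$. Hence $T$ is weakly Dunford-Pettis. An equivalent route is to use the adjoint: $T'(f_j)\overset{w}{\rightarrow}0$ in $X'$, and then apply the property to the pair $((x_j),(T'f_j))$.

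No step is a genuine obstacle. The only point needing care is $(d)\Rightarrow(a)$: one must note that bounded operators preserve weakly null sequences, and check that the definition of weakly Dunford-Pettis used is the sequential one above, consistent with Proposition \ref{DwDP1}.
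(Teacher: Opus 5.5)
Your proof is correct and follows the paper's cycle $(a)\Rightarrow(b)\Rightarrow(c)\Rightarrow(d)\Rightarrow(a)$ exactly: $(a)\Rightarrow(b)$ via Proposition \ref{DwDP1}, and $(c)\Rightarrow(d)$ via $x_j - Id_X(x_j)=0$. For $(d)\Rightarrow(a)$, the paper only asserts the step; your argument that a bounded $T$ is weak-to-weak continuous, so $(T(x_j))_{j=1}^{\infty}$ is weakly null and the Dunford-Pettis property then applies, supplies the missing justification.
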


\begin{proof}
	"$(a) \Rightarrow (b)$" It is imediate from Proposition \ref{DwDP1}.
	
	"$(b) \Rightarrow (c)$" It is immediate
	
	"$(c) \Rightarrow (d)$" Let $(x_j)_{j=1}^{\infty}$ in $X$ and $(f_j)_{j=1}^{\infty}$ in $X'$ such that $x_j \overset{w}{\longrightarrow} 0$ and $f_j \overset{w}{\longrightarrow} 0$. Since 
	\begin{equation*}
		\|x_j - Id_X(x_j)\| = \|x_j-x_j\| = 0
	\end{equation*}
and the identity operator of $X$ is Weakly Demi Dunford–Pettis, then 
\begin{equation*}
	|f_j(x_j)|  \longrightarrow 0.
\end{equation*}

"$(d) \Rightarrow (a)$" Since $X$ has the Dunford-Pettis property, then each operator $T \colon X \longrightarrow X$ is Weakly Dunford-Pettis.
\end{proof}

It is important to note that there is an operator $T\colon X \longrightarrow X$ Weakly Demi Dunford-Pettis even if $X$ is not a Dunford-Pettis set, for example, $-Id_{\ell_1}\colon \ell_2 \longrightarrow \ell_2$ is Weakly Demi Dunford-Pettis but $\ell_2$ is not a Dunford-Pettis set, since, $e_j \overset{w}{\longrightarrow} 0$ on $\ell_2$ but associating each $e_j$ with a functional $f_j \in \ell_2'$  we have that $f_j \overset{w}{\longrightarrow} 0$ on $\ell_2'$ and
\begin{equation*}
	|f_j(e_j)| = 1
\end{equation*}
that does not converge to zero, so $\ell_2$ has not the Dunford-Pettis property.

The next result will be useful to investigate under which conditions the Demi Dunford-Pettis operators coincide with the Weakly Demi Dunford-Pettis operators.

\begin{remark}
	It is immediately apparent that every Schur set is a Dunford-Pettis set, but the converse is generally not true; for example, the set $c_0$ is not a Schur set but is a Dunford-Pettis set. However, when we consider the reflexive set, then these two concepts are equivalent, that is, a set X is Schur if, and only if, it is Dunford-Pettis.
\end{remark}

\begin{lemma}{\label{SPDP}}
	Let $X$ be a Banach space such that $X'$ has the Schur property, then $X$ has Dunford-Pettis property
\end{lemma}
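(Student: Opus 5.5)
The plan is to argue directly from the definition of the Dunford--Pettis property recalled in the introduction. We fix a weakly null pair $((x_j)_{j=1}^{\infty}, (f_j)_{j=1}^{\infty})$ in $X \times X'$: that is, $x_j \overset{w}{\rightarrow} 0$ in $\sigma(X,X')$ and $f_j \overset{w}{\rightarrow} 0$ in $\sigma(X',X'')$. The goal is to show that $|f_j(x_j)| \rightarrow 0$ as $j \rightarrow \infty$.

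The first step uses the hypothesis that $X'$ has the Schur property. This means the identity on $X'$ is Dunford--Pettis, so every weakly null sequence in $X'$ is norm null. Applying this to $(f_j)_{j=1}^{\infty}$, which is weakly null in $\sigma(X',X'')$, gives $\|f_j\| \rightarrow 0$.

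The second step uses boundedness of $(x_j)_{j=1}^{\infty}$. A weakly convergent sequence is weakly bounded, hence norm bounded by the uniform boundedness principle applied to the canonical images of the $x_j$ in $X''$. So there is $M>0$ with $\|x_j\| \le M$ for all $j$. The estimate
\begin{equation*}
	|f_j(x_j)| \le \|f_j\|\,\|x_j\| \le M\,\|f_j\| \longrightarrow 0
\end{equation*}
then finishes the argument, and shows that $X$ has the Dunford--Pettis property.

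No step is a genuine obstacle. The only point needing care is to use the Schur property in $X'$ with respect to the weak topology $\sigma(X',X'')$ and not the weak$^*$ topology, which matches the convention in the definition of the Dunford--Pettis property. It also helps to record the boundedness of weakly convergent sequences explicitly, since that is what makes the product estimate work.
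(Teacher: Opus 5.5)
Your proof is correct and follows the paper's argument exactly: the Schur property of $X'$ gives $\|f_j\| \rightarrow 0$, and the estimate $|f_j(x_j)| \le \|f_j\|\,\|x_j\|$ together with the boundedness of the weakly null sequence $(x_j)_{j=1}^{\infty}$ finishes the proof. Your explicit appeal to the uniform boundedness principle for that boundedness is a small improvement, since the paper simply asserts it.
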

\begin{proof}
	Let $X$ be a Banach space with $X'$ has the Schur property and  $(x_j)_{j=1}^{\infty}$ in $X$ and $(f_j)_{j=1}^{\infty}$ in $X'$ such that
	\begin{equation*}
		x_j \overset{w}{\rightarrow} 0 \text{ and } f_j \overset{w}{\rightarrow} 0
	\end{equation*}
as $j \rightarrow \infty$. As $X'$ has the Schur property then
\begin{equation*}
	\|f_j\| \rightarrow 0.
\end{equation*}
As,
\begin{equation*}
	|f_j(x_j)| \le \|f_j\| \|x_j\|
\end{equation*}
then, $|f_j(x_j)| \rightarrow 0$, because $(x_j)_{j=1}^{\infty}$ is a bounded sequence.
\end{proof}

An interesting result concerning the Schur and Dunford-Pettis properties is the following.

\begin{proposition}
	Let $X$ a reflexive Banach space. Then, $X$ has a Schur property if, and only if, $X$ has a Dunford-Pettis property.
\end{proposition}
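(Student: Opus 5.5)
Both implications are handled separately, and reflexivity is used only in the direction from the Dunford--Pettis property to the Schur property.

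For the direction ``Schur $\Rightarrow$ Dunford--Pettis'', no reflexivity is needed. Let $x_j \overset{w}{\rightarrow} 0$ in $X$ and $f_j \overset{w}{\rightarrow} 0$ in $X'$. The Schur property gives $\|x_j\| \rightarrow 0$. Since $(f_j)_{j=1}^{\infty}$ is weakly convergent, it is norm bounded by the uniform boundedness principle. Then
\begin{equation*}
|f_j(x_j)| \le \|f_j\|\,\|x_j\| \rightarrow 0.
\end{equation*}
This is the mirror image of the argument in Lemma \ref{SPDP}.

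For the direction ``Dunford--Pettis $\Rightarrow$ Schur'', assume $X$ is reflexive with the Dunford--Pettis property, and argue by contradiction. Suppose there is $x_j \overset{w}{\rightarrow} 0$ with $\|x_j\| \not\rightarrow 0$. Passing to a subsequence, we may assume $\|x_j\| \ge \varepsilon > 0$ for all $j$. By Hahn--Banach, choose $f_j \in X'$ with $\|f_j\| = 1$ and $f_j(x_j) = \|x_j\| \ge \varepsilon$.

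The unit ball of $X'$ is weakly compact because $X'$ is reflexive. By Eberlein--\v{S}mulian, some subsequence $(f_{j_k})_{k=1}^{\infty}$ converges weakly to some $f \in X'$.

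Now apply Proposition~\ref{WDDPequi} with $T = 0$. Since $X$ has the Dunford--Pettis property, Theorem~\ref{WDDP1} makes $Id_X$ and indeed every operator weakly demi Dunford--Pettis. We have $x_{j_k} \overset{w}{\rightarrow} 0$ and $f_{j_k} \overset{w}{\rightarrow} f$, and the condition $(x_{j_k} - 0) \rightarrow (0 - 0)$ in norm is not available. So the direct route is to center the functionals: put $g_k = f_{j_k} - f \overset{w}{\rightarrow} 0$. The Dunford--Pettis property gives $g_k(x_{j_k}) \rightarrow 0$. Also $f(x_{j_k}) \rightarrow 0$ because $x_{j_k} \overset{w}{\rightarrow} 0$. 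Hence
\begin{equation*}
f_{j_k}(x_{j_k}) = g_k(x_{j_k}) + f(x_{j_k}) \rightarrow 0,
\end{equation*}
which contradicts $f_{j_k}(x_{j_k}) \ge \varepsilon$.

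The main obstacle is extracting the weakly convergent subsequence of norming functionals. This is exactly where reflexivity enters: bounded sequences in $X'$ then have weakly convergent subsequences, and weak and weak$^*$ topologies on $X'$ coincide. The remaining steps are the routine centering and triangle-inequality argument.
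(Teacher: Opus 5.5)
Your proof is correct in both directions. The paper states this proposition without any proof; it is followed only by a remark. Your argument is nonetheless essentially the one the paper uses for Proposition~\ref{DDPWDDP}, specialized to $T=Id_X$:
\begin{itemize}
\item your forward direction is Lemma~\ref{SPDP} with the roles of $X$ and $X'$ swapped;
\item your converse is the same contradiction via norming functionals, extraction of a weakly convergent subsequence, and the centering step from the proof of Proposition~\ref{WDDPequi}.
\end{itemize}

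Two small comments. First, your aside about applying Proposition~\ref{WDDPequi} with $T=0$ is a dead end, but $T=Id_X$ works immediately. The hypothesis $x_{j_k}-Id_X(x_{j_k})\to 0$ holds trivially, and $Id_X$ is weakly demi Dunford--Pettis by Theorem~\ref{WDDP1}. So that proposition gives $f_{j_k}(x_{j_k})\to f(0)=0$ directly, and your explicit centering computation just re-proves this special case.

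Second, your subsequence extraction is more careful than the paper's analogous step in Proposition~\ref{DDPWDDP}, which cites Banach--Alaoglu. That theorem gives weak$^*$ compactness of the dual ball, but not, in general, sequential compactness. Your use of weak compactness of the ball of the reflexive space $X'$ together with Eberlein--\v{S}mulian is what actually justifies passing to a weakly convergent subsequence. Reducing to the separable closed span of $(x_j)$ would also work.
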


\begin{remark}
	It is important to note that for any Banach space to possess the Schur property implies that the space possesses the Dunford-Pettis property. But the converse implication requires the additional hypothesis of reflexivity.
\end{remark}

It's not hard to see that every Demi Dunford-Pettis operator is, in particular, Weakly Demi Dunford-Pettis, but the converse need not to be true. In fact, Since $c_0'$ is isomorphic to $\ell_1$, that has the Schur property by \cite[Theorem $2.5.18$]{MN91}, then by Lemma \ref{SPDP} it is immediate that $c_0$ has Dunford-Pettis property. By Theorem \ref{WDDP1} we have that $Id_{c_0}\colon c_0 \longrightarrow c_0$ is Weakly Dunfor-Pettis operator but it fails to be Demi Dunford-Pettis operator, Since $c_0$ does not have the Schur property. The next result gives us a condition for the class of Weakly Demi Dunford-Pettis to be Demi Dunford-Pettis operators.

\begin{proposition}\label{DDPWDDP}
	If $X$ is a reflexive Banach space, then every weakly demi Dunford–Pettis operator $T\colon X \longrightarrow X$ is a Dunford-Pettis operator.
\end{proposition}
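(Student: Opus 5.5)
The plan is the standard subsequence-and-norming-functional argument, with reflexivity supplying weak compactness in $X'$. It works in full only under the demi hypothesis $\|x_j - T(x_j)\|\to 0$, and I flag below that the statement as worded appears not to hold without it.

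Let $(x_j)_{j=1}^{\infty}$ be weakly null in $X$. We must show $\|T(x_j)\|\to 0$. It suffices to show that every subsequence has a further subsequence along which $\|T(x_j)\|\to 0$, so we argue by contradiction. Suppose that, after passing to a subsequence, $\|T(x_j)\|\ge \varepsilon>0$ for all $j$.

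\textbf{Step 1 (norming functionals).} By Hahn--Banach, pick $g_j\in X'$ with $\|g_j\|=1$ and $g_j(T(x_j))=\|T(x_j)\|$. Put $h_j=T'(g_j)$. Then $h_j(x_j)=\|T(x_j)\|\ge\varepsilon$ and $\|h_j\|\le\|T\|$.

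\textbf{Step 2 (weak compactness).} Since $X$ is reflexive, so is $X'$. Hence the bounded sequence $(h_j)$ has a weakly convergent subsequence, say $h_j\overset{w}{\rightarrow} h$. Setting $f_j=h_j-h$ gives $f_j\overset{w}{\rightarrow}0$. Because $x_j\overset{w}{\rightarrow}0$, we have $h(x_j)\to 0$, and therefore
\begin{equation*}
|f_j(x_j)|\ge \|T(x_j)\| - |h(x_j)|,
\end{equation*}
which stays bounded below by $\varepsilon/2$ for all large $j$.

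\textbf{Step 3 (invoking WDDP).} We now have a weakly null pair $((x_j),(f_j))$ in $X\times X'$ with $|f_j(x_j)|\not\to 0$. If in addition $\|x_j-T(x_j)\|\to 0$, the definition of weakly demi Dunford--Pettis forces $|f_j(x_j)|\to 0$, which is the desired contradiction. Equivalently, one can apply Proposition \ref{WDDPequi} with limits $x=0$ and $f=0$.

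\textbf{The main obstacle} is exactly Step 3. The Dunford--Pettis conclusion is required for \emph{every} weakly null sequence, whereas the WDDP hypothesis is only available along sequences satisfying $\|x_j-T(x_j)\|\to 0$. My first attempt to bridge this gap would be to replace $x_j$ by a sequence $z_j$ built from $x_j$ and $T(x_j)$ (for instance a Neumann-type correction) such that $z_j-T(z_j)\to 0$ in norm while $z_j$ stays weakly null and $f_j(z_j)$ still controls $\|T(x_j)\|$.

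I expect this to fail in general. The operator $-Id_{\ell_2}$ is weakly demi Dunford--Pettis on the reflexive space $\ell_2$, as noted in the paper's earlier example, yet it is not Dunford--Pettis. So the argument above, carried out along sequences with $\|x_j-T(x_j)\|\to 0$, establishes the conclusion only in that demi sense: for such sequences one obtains $\|T(x_j)\|\to 0$, and hence also $\|x_j\|\to 0$. I do not expect it to extend to arbitrary weakly null sequences, and I cannot supply a proof of the unrestricted Dunford--Pettis conclusion as worded.
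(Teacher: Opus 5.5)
Your diagnosis is correct: as literally worded the statement is false ($-Id_{\ell_2}$ is weakly demi Dunford--Pettis on the reflexive space $\ell_2$ but is not Dunford--Pettis). The paper's own proof in fact only handles weakly null sequences with $\|x_j-T(x_j)\|\to 0$, so it proves that $T$ is \emph{Demi} Dunford--Pettis, which is what the surrounding text intends, and for that version your argument is essentially the paper's (norming functionals, a weakly convergent subsequence in $X'$ from reflexivity, then the WDDP condition, equivalently Proposition \ref{WDDPequi} with limits $0$ and $f$), the only difference being that the paper norms $x_j$ itself rather than $T(x_j)$ via $T'$, which gives $\|x_j\|\to 0$ directly without your final triangle-inequality step.
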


\begin{proof}
	Let $X$ be a reflexive Banach space and $T\colon X \longrightarrow X$ be a weakly Demi Dunford-Pettis operator. Let's see that $T$ is a Dunford-Pettis operator. Suppose that $T$ is not a Dunford-Pettis operator, that is, there are $(x_j)_{j=1}^{\infty}$ in $X$ with $x_j \overset{w}{\rightarrow} 0$ and $\|x_j - T(x_j)\| \rightarrow 0$, and $\epsilon > 0$ such that
	\begin{equation*}
		\|x_j\| > \epsilon\text{, } \forall j \in \mathbb{N}.
	\end{equation*}
By Hahn-Banach theorem, there exists for each $j \in \mathbb{N}$ a functional $f_j \in X'$ such that
\begin{equation*}
	\|f_j\| = 1 \text{ and } f_j(x_j) = \|x_j\|.
\end{equation*}
By the Banach - Alagolu - Bournaki theorem there is a subsequence $(f_{j_k})_{k=1}^{\infty}$ of $(f_j)_{j=1}^{\infty}$ weak* convergent, that is, there exists a functional $f$ such that
\begin{equation*}
	f_{n_k}\overset{w^*}{\rightarrow} f
\end{equation*}
as $k \rightarrow \infty$. Since $X$ is a reflexive set then 
\begin{equation*}
	f_{n_k}\overset{w}{\rightarrow} f
\end{equation*}
as $k \rightarrow \infty$.
Since $T\colon X \longrightarrow X$ is a weakly Demi Dunford-Pettis operator then by Proposition \ref{WDDPequi}
\begin{equation*}
	f_{n_k}(x_{n_k}) \overset{|\cdot|}{\rightarrow} f(0) = 0,
\end{equation*}
which does not occur. Then, $T$ is a Dunford-Pettis operator.
\end{proof}

The last result ensures that reflective spaces happen

\begin{equation*}
	\text{Demicompact } \overset{\cite[Proposition \ 2.7]{BHJ22}}{=} \text{ Demi Dunford-Pettis} \overset{\ref{DDPWDDP}}{=} \text{ Weakly Demi Dunford-Pettis}.
\end{equation*}

The following result asserts that an Dunford Pettis perturbation of an Weakly Demi Dunford Pettis operator is Weakly Demi Dunford Pettis.

\begin{proposition}\label{DDP+DP}
	Let $T \colon X \rightarrow X$ be an order weakly demi Dunford-Pettis operator. If $S \colon X \rightarrow X$ is Dunford-Pettis, then the operator $T + S$ is  weakly Demi Dunford-Pettis.
\end{proposition}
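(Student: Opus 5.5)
The argument is a direct transfer: show that the hypothesis $\|x_j-(T+S)(x_j)\|\to 0$ forces $\|x_j-T(x_j)\|\to 0$ for the \emph{same} pair of sequences, and then apply the hypothesis on $T$ to that pair. The term ``order weakly demi Dunford-Pettis'' is not defined earlier in the paper. I will read it in the Banach lattice sense: the implication in the definition of WDDP is required only for the restricted class of admissible test pairs $((x_j),(f_j))$, namely $(x_j)$ weakly null and order bounded (contained in some interval $[-x,x]$ with $x\in X^{+}$), $(f_j)$ weakly null, and $\|x_j-T(x_j)\|\to 0$.

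The conclusion ``$T+S$ is weakly Demi Dunford-Pettis'' will be checked against that same class of test pairs. Only there can the weaker hypothesis on $T$ be invoked. When $T$ satisfies the condition for all test pairs, so that the order qualifier is superfluous, the same argument yields the full WDDP property for $T+S$.

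\textbf{Step 1.} Fix an admissible pair $((x_j),(f_j))$ with $x_j\overset{w}{\rightarrow}0$, $f_j\overset{w}{\rightarrow}0$, $(x_j)$ order bounded, and $\|x_j-(T+S)(x_j)\|\to 0$.

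\textbf{Step 2.} Since $S$ is Dunford-Pettis and $x_j\overset{w}{\rightarrow}0$, we have $\|S(x_j)\|\to 0$. Therefore
\begin{equation*}
\|x_j-T(x_j)\| \le \|x_j-(T+S)(x_j)\| + \|S(x_j)\| \longrightarrow 0 .
\end{equation*}

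\textbf{Step 3.} The pair $((x_j),(f_j))$ has not been altered. It is still weakly null in both coordinates and $(x_j)$ is still order bounded. It now also satisfies $\|x_j-T(x_j)\|\to 0$, so it is admissible for $T$. The order weakly demi Dunford-Pettis hypothesis on $T$ then gives $|f_j(x_j)|\to 0$, which is exactly the required conclusion for $T+S$.

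The main obstacle is not the estimate, which is a one-line triangle inequality. It is making sure the order hypothesis on $T$ can actually be used. This is why I deliberately avoid passing to auxiliary sequences such as $x_j-x$, or normalizing with Hahn-Banach functionals as in Proposition~\ref{DDPWDDP}. Such manipulations could destroy order boundedness, or whatever order condition the definition imposes, and then the hypothesis on $T$ would no longer apply. Because the argument reuses the original pair verbatim and changes only which operator the norm condition refers to, it is insensitive to the precise order restriction. It works for any restricted class of test sequences that does not depend on the operator.
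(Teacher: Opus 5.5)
Your proof is correct and follows the paper's argument: the triangle inequality $\|x_j-T(x_j)\|\le\|x_j-(T+S)(x_j)\|+\|S(x_j)\|$, with $\|S(x_j)\|\to 0$ because $S$ is Dunford--Pettis, lets you apply the hypothesis on $T$ to the same pair $((x_j),(f_j))$. The paper leaves the word ``order'' undefined and simply uses the ordinary weakly demi Dunford--Pettis property of $T$, so your unrestricted case is exactly its proof, and your order-bounded caveat is harmless but not needed.
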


\begin{proof}
	Let $T \colon X \rightarrow X$ be an order weakly demi Dunford-Pettis operator, $S \colon X \rightarrow X$ is Dunford-Pettis and $(x_j)_{j=1}^{\infty}$ sequence in $E$ such that $x_j \overset{w}{\rightarrow} 0$ and $\|x_j - (T+S)(x_j)\| \rightarrow 0$. Consider also $(f_j)_{j=1}^{\infty}$ such that $f_j \overset{w}{\rightarrow} 0$, then
	\begin{equation*}
		\|x_j - T(x_j)\| \le \|x_j - (T+S)(x_j)\| + \|S(x_j)\|.
	\end{equation*}
Since,  $S \colon X \rightarrow X$ is Dunford-Pettis then
\begin{equation*}
\|x_j - T(x_j)\|\rightarrow 0,
\end{equation*}
as $j \rightarrow \infty$. So, as $T \colon X \rightarrow X$ be an weakly demi Dunford-Pettis operator then
\begin{equation*}
	|f_j(x_j)| \rightarrow 0
\end{equation*}
as $j \rightarrow \infty$. So, the operator $T + S$ is  weakly Demi Dunford-Pettis.
\end{proof}

In the next result, we study a condition for the matrix operator
\begin{equation*}
 \tilde{T} = \left(\begin{matrix}
		T_1 &T_2\\
		T_3 &T_4
	\end{matrix}\right)
\end{equation*}
 to be weakly Demi Dunford-Pettis, where $\tilde{T}$ is an operator acting on the Banach space $X \times Y$, with $X$ and $Y$ being Banach spaces, $T_1 \colon X \rightarrow X$, $T_4 \colon Y \rightarrow Y$, $T_2 \colon Y \rightarrow X$ and $T_3 \colon X \rightarrow Y$.
 
 \begin{proposition}
 	Let $X$ and $Y$ be Banach spaces and $T_1 \colon X \rightarrow X$, $T_4 \colon Y \rightarrow Y$, $T_2 \colon Y \rightarrow X$ and $T_3 \colon X \rightarrow Y$ be bounded operators. Consider $\tilde{T} \colon X \times Y \rightarrow X \times Y$ given by
 	\begin{equation*}
 		\tilde{T} = \left(\begin{matrix}
 			T_1 &T_2\\
 			T_3 &T_4
 		\end{matrix}\right)
 	\end{equation*}.
 If $T_1$ and $T_4$ are Weakly Demi Dunforf-Pettis and $T_2$ is Dunford-Pettis operator, then $\tilde{T}$ is  Weakly Demi Dunforf-Pettis.
 \end{proposition}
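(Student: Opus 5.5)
The plan is to verify the definition of weakly Demi Dunford-Pettis directly on $X\times Y$, one coordinate at a time. Let $z_j=(x_j,y_j)\overset{w}{\rightarrow}0$ in $X\times Y$ and $\varphi_j=(f_j,g_j)\overset{w}{\rightarrow}0$ in $(X\times Y)'\cong X'\times Y'$, with $\|z_j-\tilde{T}(z_j)\|\rightarrow 0$. Since the coordinate projections and injections are bounded, we get $x_j\overset{w}{\rightarrow}0$, $y_j\overset{w}{\rightarrow}0$, $f_j\overset{w}{\rightarrow}0$ and $g_j\overset{w}{\rightarrow}0$. The norm condition splits into
\begin{equation*}
\|x_j-T_1(x_j)-T_2(y_j)\|\rightarrow 0 \quad\text{and}\quad \|y_j-T_3(x_j)-T_4(y_j)\|\rightarrow 0 .
\end{equation*}
Since $\varphi_j(z_j)=f_j(x_j)+g_j(y_j)$, it suffices to show $|f_j(x_j)|\rightarrow 0$ and $|g_j(y_j)|\rightarrow 0$.

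\textbf{First coordinate.} I would argue exactly as in Proposition \ref{DDP+DP}. Because $T_2$ is Dunford-Pettis and $y_j\overset{w}{\rightarrow}0$, we have $\|T_2(y_j)\|\rightarrow 0$. Hence
\begin{equation*}
\|x_j-T_1(x_j)\|\le \|x_j-T_1(x_j)-T_2(y_j)\|+\|T_2(y_j)\|\rightarrow 0 .
\end{equation*}
Since $T_1$ is weakly Demi Dunford-Pettis, this gives $|f_j(x_j)|\rightarrow 0$.

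\textbf{Second coordinate (main obstacle).} Here the analogous estimate reads
\begin{equation*}
\|y_j-T_4(y_j)\|\le \|y_j-T_3(x_j)-T_4(y_j)\|+\|T_3(x_j)\|,
\end{equation*}
so to invoke the hypothesis on $T_4$ we need $\|T_3(x_j)\|\rightarrow 0$. The stated hypotheses only give $T_3(x_j)\overset{w}{\rightarrow}0$. I would first try to extract norm convergence from what the first coordinate provides, namely $x_j\overset{w}{\rightarrow}0$ together with $\|x_j-T_1(x_j)\|\rightarrow 0$. This works if $T_1$ is in fact Demi Dunford-Pettis, since then $\|x_j\|\rightarrow 0$ and so $\|T_3(x_j)\|\rightarrow 0$. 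By Proposition \ref{DDPWDDP} that is automatic when $X$ is reflexive. It also works if $T_3$ is itself Dunford-Pettis. I do not see how to close this step from the hypotheses exactly as stated; knowing only $|f_j(x_j)|\rightarrow 0$ does not control $\|T_3(x_j)\|$.

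So the proof I would write establishes the claim under one extra assumption that makes $\|T_3(x_j)\|\rightarrow 0$: either $T_3$ Dunford-Pettis, or $T_1$ Demi Dunford-Pettis (for instance $X$ reflexive). In the general non-reflexive case I would look for a counterexample with $T_3\neq 0$, say $T_2=0$ and $T_3$ not Dunford-Pettis. Once $\|y_j-T_4(y_j)\|\rightarrow 0$ is in hand, the weak Demi Dunford-Pettis property of $T_4$ gives $|g_j(y_j)|\rightarrow 0$. Adding the two coordinates then gives $|\varphi_j(z_j)|\rightarrow 0$, so $\tilde{T}$ is weakly Demi Dunford-Pettis.
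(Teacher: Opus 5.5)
Your first coordinate is exactly the paper's argument. Your diagnosis of the second coordinate is also correct, and sharper than the paper's proof. The paper gets past this point by showing $|\varphi(y_j-T_4(y_j))|\to 0$ for every $\varphi\in Y'$ and then concluding $\|y_j-T_4(y_j)\|_Y\to 0$. That only gives weak nullity of $y_j-T_4(y_j)$, so the step is invalid. It also cannot be repaired, because the proposition is false as stated.

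\textbf{Counterexample.} Take the following spaces and map.
\begin{itemize}
\item Let $X=\ell_\infty$ and $Y=\ell_2\times\ell_\infty$, with $\|(u,v)\|=\max\{\|u\|_2,\|v\|_\infty\}$.
\item Fix $(d_n)$ dense in the unit sphere of $\ell_2$.
\item Define $\Phi\colon Y\to\ell_\infty$ by $\Phi(u,v)_{2n-1}=\langle u,d_n\rangle$ and $\Phi(u,v)_{2n}=v_n$. Then $\|\Phi(y)\|_\infty=\|y\|$.
\end{itemize}
Put $T_1=Id_{\ell_\infty}$, $T_2=0$, $T_3(w)=(0,w)$ and $T_4(y)=y-(0,\Phi(y))$. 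The hypotheses hold:
\begin{itemize}
\item $T_1$ is weakly demi Dunford--Pettis, because $\ell_\infty\cong C(\beta\mathbb{N})$ has the Dunford--Pettis property.
\item $T_2$ is Dunford--Pettis.
\item $T_4$ is even demi Dunford--Pettis, since $\|y_j-T_4(y_j)\|=\|\Phi(y_j)\|_\infty=\|y_j\|$.
\end{itemize}
Now let $y_j=(e_j,0)$ and $x_j=\Phi(y_j)$. Both sequences are weakly null, being images of $(e_j)\subset\ell_2$ under bounded operators. Moreover
\begin{equation*}
\tilde{T}(x_j,y_j)=\bigl(x_j,\,(0,x_j)+(e_j,-x_j)\bigr)=(x_j,y_j),
\end{equation*}
so $\|z_j-\tilde{T}(z_j)\|=0$.

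The functionals $h_j(x,(u,v))=\langle u,e_j\rangle$ are the image of $(e_j)$ under a bounded operator $\ell_2\to(X\times Y)'$, so they are weakly null. Yet $h_j(z_j)=1$ for all $j$. Hence $\tilde{T}$ is not weakly demi Dunford--Pettis. Here $T_3$ is not Dunford--Pettis and $T_1$ is not demi Dunford--Pettis, just as your analysis predicts.

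So your proposal does not prove the stated proposition, but no proof could. Your two amended versions are correct. If $T_3$ is Dunford--Pettis, or if $T_1$ is demi Dunford--Pettis (for instance when $X$ is reflexive, which is what the proof of Proposition \ref{DDPWDDP} actually establishes), then $\|T_3(x_j)\|\to 0$ and your argument goes through.

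You can also extract slightly more than $|f_j(x_j)|\to 0$ from the first coordinate. Since $g\mapsto g\circ T_3$ is bounded from $Y'$ to $X'$, the weak demi Dunford--Pettis property of $T_1$ gives $g_j(T_3(x_j))=(g_j\circ T_3)(x_j)\to 0$ for every weakly null $(g_j)$ in $Y'$. When $Y$ is reflexive this forces $\|T_3(x_j)\|\to 0$: use norming functionals and a weakly convergent subsequence, as in Proposition \ref{DDPWDDP}. The example above shows this weaker information is not enough in general.
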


\begin{proof}
	Let $(z_j)_{j=1}^{\infty}$, with $z_j = (x_j, y_j)$, be a sequence in $X \times Y$ such that $z_j \overset{w}{\rightarrow} 0$, as $j\rightarrow \infty$ and $(h_j)_{j=1}^{\infty}$, with $h_j = f_j + g_j$, in $(X \times Y)'$ such that $h_j \overset{w}{\rightarrow} 0$, as $j\rightarrow \infty$, and $f_j(\cdot) = h_j(\cdot, 0)$ and $g_j(\cdot) = h_j(0, \cdot)$ and $$\|z_j - \tilde{T}(z_j)\|_{X\times Y} \rightarrow 0.$$ It's not hard to see that, since  $z_j \overset{w}{\rightarrow} 0$ and $h_j \overset{w}{\rightarrow} 0$ then $x_j \overset{w}{\rightarrow} 0$, $y_j \overset{w}{\rightarrow} 0$, $f_j \overset{w}{\rightarrow} 0$ and $g_j \overset{w}{\rightarrow} 0$, as  $j\rightarrow \infty$. We have to prove that $|h_j(z_j)| \rightarrow 0$. It is suffices to prove that $|f_j(x_j)| \rightarrow 0$ and $|g_j(y_j)| \rightarrow$, as $j \rightarrow 0$, since $|h_j(z_j)| \le |f_j(x_j)| + |g_j(y_j)|$.
	
	As
	\begin{align*}
		\|z_j - \tilde{T}(z_j)\|_{X\times Y} &= \|(x_j, y_j) - \tilde{T}(x_j, y_j)\|_{X \times Y}\\
		&= \|(x_j, y_j) - (T_1(x_j) + T_2(y_j), T_3(x_j) + T_4(y_j))\|_{X \times Y}\\
		&= \|(x_j - T_1(x_j) - T_2(y_j), y_j - T_3(x_j) - T_4(y_j))\|_{X \times Y}\\
		&= \|x_j - T_1(x_j) - T_2(y_j)\|_X + \|y_j - T_3(x_j) - T_4(y_j)\|_Y
	\end{align*}
and $\|z_j - \tilde{T}(z_j)\|_{X\times Y} \rightarrow 0$ then $\|x_j - T_1(x_j) - T_2(y_j)\|_X \rightarrow 0$ and $\|y_j - T_3(x_j) - T_4(y_j)\|_Y \rightarrow 0$, as $j\rightarrow \infty$. On the one hand
\begin{equation*}
	\|x_j - T_1(x_j)\|_X \le \|x_j - T_1(x_j) - T_2(y_j)\|_X + \|T_2(y_j)\|_X
\end{equation*}
and as $T_2$ is Dunford-Pettis operator, then
\begin{equation*}
	\|x_j - T_1(x_j)\|_X \rightarrow 0
\end{equation*} 
as $j \rightarrow \infty$. Thus, $|f_j(x_j)| \rightarrow 0$ because $T_1$ is Weakly Demi Dunford-Pettis. On the other hand, for any $\varphi \in Y'$
\begin{equation*}
	|\varphi(y_j - T_4(y_j))| \le |\varphi(y_j - T_3(x_j) - T_4(y_j))| + |\varphi(T_3(x_j))| \le \|\varphi\| \|y_j - T_3(x_j) - T_4(y_j)\|_Y + |\varphi\circ T_3(x_j)|.
\end{equation*}
Thus, $|\varphi(y_j - T_4(y_j))| \rightarrow 0$, for any $\varphi \in Y'$. So,
\begin{equation*}
	\|y_j - T_4(y_j)\|_Y \rightarrow 0.
\end{equation*}
Then, $|g_j(y_j)| \rightarrow 0$, as $j \rightarrow \infty$, because $T_4$ is Weakly Demi Dunford-Pettis.
	
\end{proof}

The next results discuss conditions on how the concept of WDDP travels between an operator and its compositions with itself.

\begin{proposition}
	Let  $ m > 1$ a natural number and $X$ be a Banach space and $T \colon X \rightarrow X$ be an operator. If $T^m$ is Weakly Demi Dunford-Pettis then $T$ is also Weakly Demi Dunford-Pettis.
\end{proposition}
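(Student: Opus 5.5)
The plan is to reduce directly to the definition applied to $T^m$. Let $(x_j)_{j=1}^{\infty}$ in $X$ and $(f_j)_{j=1}^{\infty}$ in $X'$ satisfy $x_j \overset{w}{\rightarrow} 0$, $f_j \overset{w}{\rightarrow} 0$ and $\|x_j - T(x_j)\| \rightarrow 0$. We must show $|f_j(x_j)| \rightarrow 0$. Since $T^m$ is Weakly Demi Dunford-Pettis and the sequences $(x_j)$ and $(f_j)$ are unchanged, it suffices to prove
\begin{equation*}
\|x_j - T^m(x_j)\| \rightarrow 0 .
\end{equation*}

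For this we use the algebraic factorization
\begin{equation*}
I - T^m = (I + T + T^2 + \cdots + T^{m-1})(I - T),
\end{equation*}
which holds because $I$ and $T$ commute. Taking norms gives
\begin{equation*}
\|x_j - T^m(x_j)\| \le \Big(\sum_{k=0}^{m-1}\|T\|^k\Big)\,\|x_j - T(x_j)\|,
\end{equation*}
and the right-hand side tends to $0$ as $j\rightarrow\infty$. Alternatively, one can telescope $x_j - T^m x_j = \sum_{k=0}^{m-1} T^k(x_j - Tx_j)$ and use the boundedness of each $T^k$.

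Applying the Weakly Demi Dunford-Pettis property of $T^m$ to the pair $((x_j),(f_j))$ then yields $|f_j(x_j)|\rightarrow 0$, which is exactly the Weakly Demi Dunford-Pettis property of $T$.

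The only step with any content is the factorization estimate, and it is routine. There is no real obstacle beyond noticing that the hypothesis transfers from $I-T$ to $I-T^m$ through the bounded operator $\sum_{k=0}^{m-1}T^k$.
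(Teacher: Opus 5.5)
Your proof is correct and matches the paper's argument. The paper also telescopes $x_j - T^m(x_j)$ into the differences $T^{i-1}(x_j) - T^{i}(x_j)$, bounds each by $\|T^{i-1}\|\,\|x_j - T(x_j)\|$, and then applies the weakly Demi Dunford--Pettis property of $T^m$ to the same pair of sequences; your factorization $I - T^m = \bigl(\sum_{k=0}^{m-1} T^k\bigr)(I - T)$ is just a compact form of that estimate.
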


\begin{proof}
	Let $(x_j)_{j=1}^{\infty}$ in $X$ and $(f_j)_{j=1}^{\infty}$ in $X'$ such that
	\begin{equation*}
		x_j \overset{w}{\rightarrow} 0 \text{ and } f_j \overset{w}{\rightarrow} 0
	\end{equation*}
and $\|x_j - T(x_j)\| \rightarrow 0$. Since,
\begin{equation*}
	\|T^{m-1}(x_j) - T^m(x_j)\| \le \|T^{m-1}\| \|x_j - T(x_j)\|, \forall j \in \mathbb{N} \text{ and } m > 1,
\end{equation*}
then $\|T^{m-1}(x_j) - T^m(x_j)\| \rightarrow 0$. As,
\begin{equation*}
	\|x_j - T^m(x_j)\| \le \|x_j - T(x_j)\| + \sum_{i=2}^{m}\|T^{i-1}(x_j) - T^i(x_j)\|
\end{equation*}
so $\|x_j - T^m(x_j)\| \rightarrow 0$. Thus, $|f_j(x_j)| \rightarrow 0$, because $T^m$ is a Weakly Demi Dunford-Pettis operator.
\end{proof}

It following the same ideas as the proof of the previous proposition, we obtain the following result.

\begin{proposition}
	Let $X$ be a Banach space and $T \colon X \rightarrow X$ be an operator. If $m$ is an odd natural number greater than $1$ and $-T^m$ is Weakly Demi Dunford-Pettis then $-T$ is also Weakly Demi Dunford-Pettis.
\end{proposition}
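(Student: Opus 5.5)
We follow the proof of the previous proposition, with $-T$ in place of $T$. Take $(x_j)_{j=1}^{\infty}$ in $X$ and $(f_j)_{j=1}^{\infty}$ in $X'$ with $x_j \overset{w}{\rightarrow} 0$, $f_j \overset{w}{\rightarrow} 0$ and $\|x_j + T(x_j)\| \rightarrow 0$; this last condition is exactly $\|x_j - (-T)(x_j)\| \rightarrow 0$. We must show $|f_j(x_j)| \rightarrow 0$. Since $-T^m$ is weakly Demi Dunford-Pettis, it suffices to show that $\|x_j - (-T^m)(x_j)\| = \|x_j + T^m(x_j)\| \rightarrow 0$.

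Put $S = -T$. Because $m$ is odd, $(-1)^m = -1$, so $S^m = (-T)^m = -T^m$. The hypothesis then reads: $S^m$ is weakly Demi Dunford-Pettis, and we want to conclude that $S$ is. This is precisely the previous proposition applied to the operator $S$.

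For completeness, the underlying estimate is the telescoping bound
\begin{equation*}
	\|x_j - S^m(x_j)\| \le \|x_j - S(x_j)\| + \sum_{i=2}^{m}\|S^{i-1}(x_j) - S^{i}(x_j)\| \le \Big(1 + \sum_{i=2}^{m}\|S^{i-1}\|\Big)\|x_j - S(x_j)\|,
\end{equation*}
which tends to $0$. Applying the weakly Demi Dunford-Pettis property of $S^m = -T^m$ then gives $|f_j(x_j)| \rightarrow 0$.

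The only delicate point is the sign identity $(-T)^m = -T^m$, which uses that $m$ is odd. For even $m$ one gets $(-T)^m = T^m$ instead, so the hypothesis on $-T^m$ would not transfer. No other obstacle is expected, since the telescoping estimate is routine.
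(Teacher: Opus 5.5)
Your proof is correct and is essentially the paper's argument: the paper redoes the telescoping estimate directly, bounding $\|x_j + T^m(x_j)\|$ by $\|x_j+T(x_j)\| + \sum_{i=2}^{m}\|T^{i-1}(x_j)+T^{i}(x_j)\|$, and these terms have the same norms as your $\|S^{i-1}(x_j)-S^{i}(x_j)\|$ with $S=-T$. Your substitution $S=-T$, $S^m=-T^m$ turns this into a direct application of the previous proposition and makes explicit where the oddness of $m$ is used, which the paper leaves implicit in the alternating signs of its telescoping sum.
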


\begin{proof}
	Let $(x_j)_{j=1}^{\infty}$ in $X$ and $(f_j)_{j=1}^{\infty}$ in $X'$ such that
	\begin{equation*}
		x_j \overset{w}{\rightarrow} 0 \text{ and } f_j \overset{w}{\rightarrow} 0
	\end{equation*}
	and $\|x_j + T(x_j)\| \rightarrow 0$. Dessa forma,
	\begin{equation*}
		\|T^{m-1}(x_j) + T^m(x_j)\| \le \|T^{m-1}\| \|x_j + T(x_j)\|
	\end{equation*}
and so, $\|T^{m-1}(x_j) + T^{m}(x_j)\| \rightarrow 0$, as $j \rightarrow \infty$. Then,
\begin{equation*}
	\|x_j + T^m(x_j)\| \le \|x_j + T(x_j)\| + \sum_{i=2}^m\|T^{i-1}(x_j) + T^{i}\|. 
\end{equation*}
So, $\|x_j + T^m(x_j)\| \rightarrow 0$, as $j \rightarrow \infty$. As $-T^m$ is Weakly Demi Dunford-Pettis we have
\begin{equation*}
	|f_j(x_j)| \rightarrow 0
\end{equation*}
as $j \rightarrow 0$.
\end{proof}

\begin{proposition}
	Let $X$ a Banach space and $T \colon X \rightarrow X$ be an operator. If $T$ is a weakly Demi Dunford-Pettis and $-T$ is Demi Dunford-Pettis then $T^2$ is Weakly Demi Dunford-Pettis
\end{proposition}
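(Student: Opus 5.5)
The natural route is to factor $Id_X - T^2 = (Id_X - T)(Id_X + T)$ and transfer the hypothesis $\|x_j - T^2(x_j)\|\to 0$ into information about $x_j - T(x_j)$, using that $-T$ is Demi Dunford-Pettis. Let $x_j \overset{w}{\rightarrow} 0$ in $X$, $f_j \overset{w}{\rightarrow} 0$ in $X'$, and assume $\|x_j - T^2(x_j)\|\rightarrow 0$. Put
\begin{equation*}
	y_j = x_j - T(x_j).
\end{equation*}
Since $Id_X - T$ is bounded, $y_j \overset{w}{\rightarrow} 0$. Moreover $Id_X - T^2 = (Id_X+T)(Id_X - T)$, so
\begin{equation*}
	y_j + T(y_j) = x_j - T^2(x_j),
\end{equation*}
and therefore $\|y_j - (-T)(y_j)\| = \|x_j - T^2(x_j)\| \rightarrow 0$.

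The key step is to apply the Demi Dunford-Pettis property of $-T$ to $(y_j)_{j=1}^{\infty}$. It gives $\|y_j\|\rightarrow 0$, that is, $\|x_j - T(x_j)\|\rightarrow 0$. Since $T$ is weakly Demi Dunford-Pettis, the pair $(x_j)_{j=1}^{\infty}$, $(f_j)_{j=1}^{\infty}$ then satisfies all hypotheses of the definition, and so $|f_j(x_j)|\rightarrow 0$. This shows $T^2$ is weakly Demi Dunford-Pettis.

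The only step needing care is the choice of the auxiliary sequence. One could instead set $z_j = x_j + T(x_j)$ and try to use $T$ being weakly Demi Dunford-Pettis on it, but that definition only controls the pairing $f_j(z_j)$, not the norm of $z_j$. It therefore gives no handle to return to $x_j$. This is why the Demi Dunford-Pettis hypothesis has to be placed on $-T$ and applied to $y_j = (Id_X - T)x_j$: that factor produces norm convergence, which is exactly what the weakly Demi Dunford-Pettis condition on $T$ requires. One must also check that $(Id_X - T)$ and $(Id_X + T)$ commute, which is immediate since both are polynomials in $T$.
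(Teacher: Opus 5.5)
Your proof is correct and follows the paper's argument: you set $y_j = x_j - T(x_j)$, use $y_j + T(y_j) = x_j - T^2(x_j)$ and the Demi Dunford-Pettis property of $-T$ to get $\|x_j - T(x_j)\|\rightarrow 0$, and then invoke that $T$ is weakly Demi Dunford-Pettis. You also check explicitly that $y_j \overset{w}{\rightarrow} 0$ (because $Id_X - T$ is bounded), which the Demi Dunford-Pettis definition requires and which the paper leaves implicit.
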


\begin{proof}
	Let $(x_j)_{j=1}^{\infty}$ in $X$ and $(f_j)_{j=1}^{\infty}$ in $X'$ such that
	\begin{equation*}
		x_j \overset{w}{\rightarrow} 0 \text{ and } f_j \overset{w}{\rightarrow} 0
	\end{equation*}
	and $\|x_j - T^2(x_j)\| \rightarrow 0$, as $j \rightarrow \infty$. Defining
	\begin{equation*}
		z_j = x_j - T(x_j)
	\end{equation*}
we have that
\begin{equation*}
	\|z_j + T(z_j)\| = \|x_j - T^2(x_j)\|
\end{equation*}
and so, 
\begin{equation*}
	\|z_j - (-T(z_j))\| = \|z_j + T(z_j)\| \rightarrow 0 
\end{equation*}
as $j \rightarrow \infty$. Then, as $-T$ is Demi Dunford-Pettis operator $$\|z_j\| \rightarrow 0,$$ as $j \rightarrow \infty$. Note that
\begin{equation*}
	\|x_j - T(x_j)\| = \|z_j\| \rightarrow 0,
\end{equation*}
as $j \rightarrow \infty$. Then, since $T$ is weakly Demi Dunford-Pettis
\begin{equation*}
	|f_j(x_j)| \rightarrow 0,
\end{equation*}
as $j \rightarrow \infty$

\end{proof}

Another result in this direction is the following.

\begin{proposition}
		Let $X$ a Banach space and $T \colon X \rightarrow X$ be an operator. If $T$ is a weak Dunford-Pettis and $-T$ is weakly Demi Dunford-Pettis then $T^2$ is Weakly Demi Dunford-Pettis
\end{proposition}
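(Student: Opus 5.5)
The plan is to argue directly from the definition, in the same way as Proposition \ref{DwDP1}, using only that $T$ is weakly Dunford-Pettis. Recall that $T$ is weakly Dunford-Pettis when $f_j(T(y_j)) \rightarrow 0$ for every pair of weakly null sequences $(y_j)_{j=1}^{\infty}$ in $X$ and $(f_j)_{j=1}^{\infty}$ in $X'$. The hypothesis that $-T$ is weakly Demi Dunford-Pettis will probably not be needed.

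Fix $(x_j)_{j=1}^{\infty}$ in $X$ and $(f_j)_{j=1}^{\infty}$ in $X'$ with $x_j \overset{w}{\rightarrow} 0$, $f_j \overset{w}{\rightarrow} 0$ and $\|x_j - T^2(x_j)\| \rightarrow 0$. I would split
\begin{equation*}
|f_j(x_j)| \le \|f_j\|\,\|x_j - T^2(x_j)\| + |f_j(T(T(x_j)))|.
\end{equation*}
The first term tends to zero. This is because a weakly null sequence in $X'$ is norm bounded by the uniform boundedness principle, so $\sup_j \|f_j\| < \infty$, while the other factor tends to zero by assumption.

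For the second term, put $y_j = T(x_j)$. A bounded operator is weak-to-weak continuous: for each $\varphi \in X'$ we have $\varphi(T(x_j)) = (\varphi\circ T)(x_j) \rightarrow 0$. Hence $y_j \overset{w}{\rightarrow} 0$. Applying the weakly Dunford-Pettis property of $T$ to the weakly null pair $((y_j)_{j=1}^{\infty},(f_j)_{j=1}^{\infty})$ then gives $f_j(T(y_j)) = f_j(T^2(x_j)) \rightarrow 0$. Combining the two terms yields $|f_j(x_j)| \rightarrow 0$, so $T^2$ is weakly Demi Dunford-Pettis.

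In effect, this shows that $T^2$ is itself weakly Dunford-Pettis and then invokes Proposition \ref{DwDP1}. The only step requiring any care is checking that $T(x_j)$ is again weakly null so that the weakly Dunford-Pettis hypothesis can be applied a second time, and that is routine.
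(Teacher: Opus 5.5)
Your proof is correct, but it takes a different route from the paper's.

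The paper sets $z_j = x_j - T(x_j)$ and notes that $\|z_j + T(z_j)\| = \|x_j - T^2(x_j)\| \rightarrow 0$. It also uses, without stating it, that $z_j \overset{w}{\rightarrow} 0$. It then invokes the hypothesis that $-T$ is weakly Demi Dunford-Pettis to get $|f_j(z_j)| \rightarrow 0$. Finally it concludes from $|f_j(x_j)| \le |f_j(z_j)| + |f_j(T(x_j))|$, applying the weak Dunford-Pettis property of $T$ once, to the pair $((x_j),(f_j))$.

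You instead split off $x_j - T^2(x_j)$ in norm, using $\sup_j\|f_j\|<\infty$. You then apply the weak Dunford-Pettis property of $T$ to the weakly null sequence $(T(x_j))$, so the hypothesis on $-T$ is never used.

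Your route proves something stronger. Weak Dunford-Pettis operators are stable under composition with bounded operators, so $T^2$ is itself weak Dunford-Pettis. The proposition therefore reduces to Proposition \ref{DwDP1}, and the assumption that $-T$ is weakly Demi Dunford-Pettis is redundant. The paper's argument reuses the substitution $z_j = x_j - T(x_j)$ from the preceding proposition, where $-T$ is Demi Dunford-Pettis. That keeps the two results parallel, but it hides the fact that the second hypothesis does no real work here.
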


\begin{proof}
	Let $(x_j)_{j=1}^{\infty}$ in $X$ and $(f_j)_{j=1}^{\infty}$ in $X'$ such that
		\begin{equation*}
			x_j \overset{w}{\rightarrow} 0 \text{ and } f_j \overset{w}{\rightarrow} 0
		\end{equation*}
		and $\|x_j - T^2(x_j)\| \rightarrow 0$, as $j \rightarrow \infty$. Defining
		\begin{equation*}
			z_j = x_j - T(x_j)
		\end{equation*}
		we have that
		\begin{equation*}
			\|z_j + T(z_j)\| = \|x_j - T^2(x_j)\|
		\end{equation*}
		and so, 
		\begin{equation*}
			\|z_j - (-T(z_j))\| = \|z_j + T(z_j)\| \rightarrow 0 
		\end{equation*}
		as $j \rightarrow \infty$. Then, as $-T$ is weakly Demi Dunford-Pettis operator $$|f_j(z_j)| \rightarrow 0,$$ as $j \rightarrow \infty$. Note that, 
		\begin{equation*}
			|f_j(x_j)| \le |f_j(x_j) - f_j(T(x_j))| + |f_j(T(x_j))| = |f_j(z_j)| + |f_j(T(x_j))|
		\end{equation*}
	and as $T$ is weak Dunford-Pettis operator, then
	\begin{equation*}
		|f_j(x_j)| \rightarrow 0,
	\end{equation*}
as $j \rightarrow \infty$. So, $T^2$ is a Weakly Demi Dunford-Pettis.

\end{proof}

\section{Demi Weakly Dunford-Pettis Operators on Banach Lattices}

In this section, we will present some results for the weakly Demi Dunford-Pettis operators in the context of Banach lattices. The next two results aim to present solutions to the problem of domination of weakly Demi Dunford-Pettis operators by central operators. The theory of central operators can be found in \cite[Definition $3.28$]{AA02}, recall this concept: An operator $T\colon X \rightarrow X$ is colled central if there exists some scalar, $\lambda > 0$ such that $|T(x)| \le \lambda |x|$ for every $x\in X$.

\begin{theorem}\label{Desig}
	Let $X$ be Banach Lettices and $T, S\colon X \rightarrow X$ positive operators such that $0 \le S \le T \le Id_X$. If $T$ is weakly Demi Dunford-Pettis then the same hold to $S$.
\end{theorem}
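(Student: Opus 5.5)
The plan is to show that the hypothesis $\|x_j - S(x_j)\|\to 0$ forces $\|x_j - T(x_j)\|\to 0$. Then the weak Demi Dunford-Pettis property of $T$ applies directly to the same pair of sequences. So let $(x_j)_{j=1}^{\infty}$ in $X$ and $(f_j)_{j=1}^{\infty}$ in $X'$ satisfy $x_j \overset{w}{\rightarrow} 0$, $f_j \overset{w}{\rightarrow} 0$ and $\|x_j - S(x_j)\|\to 0$. Put $R = Id_X - S$ and $Q = Id_X - T$. From $0\le S\le T\le Id_X$ we get
\begin{equation*}
0 \le Q \le R \le Id_X .
\end{equation*}
In particular $Q$ and $R$ are positive operators dominated by the identity, i.e.\ central operators in the sense recalled above.

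The key step, which I expect to be the main obstacle, is to prove that
\begin{equation*}
\|R(|x_j|)\| = \|R(x_j)\| \quad \text{for every } j.
\end{equation*}
The difficulty is that $|x_j|$ need not be weakly null, and smallness of $R(x_j)$ does not a priori control $R(|x_j|)$.

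To prove it, I will show that $R$ is a lattice homomorphism.
\begin{enumerate}
\item[(i)] $R$ is band preserving. If $x\perp y$, then
\begin{equation*}
|R(x)|\wedge|y| \le R(|x|)\wedge |y| \le |x|\wedge|y| = 0 .
\end{equation*}
\item[(ii)] Hence $R$ is a positive orthomorphism, and positive orthomorphisms are lattice homomorphisms; I will cite the standard fact from \cite{AB85} or \cite{AA02}. A direct argument also works: write $x = x^+ - x^-$ with $x^+\perp x^-$. Then $R(x^+)\perp R(x^-)$, so
\begin{equation*}
|R(x)| = R(x^+) + R(x^-) = R(|x|).
\end{equation*}
\end{enumerate}
This gives $R(|x_j|) = |R(x_j)|$, and therefore $\|R(|x_j|)\| = \|R(x_j)\| = \|x_j - S(x_j)\| \to 0$.

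With this in hand, the positivity of $Q$ and the inequality $Q\le R$ give
\begin{equation*}
|x_j - T(x_j)| = |Q(x_j)| \le Q(|x_j|) \le R(|x_j|).
\end{equation*}
Since the norm of a Banach lattice is monotone on absolute values, it follows that
\begin{equation*}
\|x_j - T(x_j)\| \le \|R(|x_j|)\| \to 0 .
\end{equation*}
Now $x_j \overset{w}{\rightarrow} 0$, $f_j \overset{w}{\rightarrow} 0$ and $\|x_j - T(x_j)\|\to 0$. Because $T$ is weakly Demi Dunford-Pettis, we conclude $|f_j(x_j)|\to 0$, which shows that $S$ is weakly Demi Dunford-Pettis.
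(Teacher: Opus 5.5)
Your proof is correct and follows essentially the paper's route. Both arguments rest on the chain $|x_j - T(x_j)| \le (Id_X - T)(|x_j|) \le (Id_X - S)(|x_j|) = |(Id_X - S)(x_j)|$ and then apply the weak Demi Dunford-Pettis property of $T$; the differences are that the paper gets $|(Id_X - S)(x)| = (Id_X - S)(|x|)$ by quoting the central-operator theorem (Theorem 3.30 of Abramovich--Aliprantis), which you prove directly via disjointness preservation, and it uses an unnecessary equality for $Id_X - T$ where you correctly use only positivity.
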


\begin{proof}
	Let $T, S\colon X \rightarrow X$ positive operators such that $0 \le S \le T \le I$, $(f_j)_{j=1}^{\infty}$ in $X'$ with $f_j \overset{w}{\rightarrow} 0$ and $(x_j)_{j=1}^{\infty}$ in $X$ with $x_j \overset{w}{\rightarrow} 0$ and $\|x_j - S(x_j)\| \rightarrow 0$ as $j\rightarrow \infty$. We have to prove that 
	\begin{equation*}
		|f_j(x_j)| \rightarrow 0.
	\end{equation*}
By \cite[Theorem $3.30$]{AA02} we have that
\begin{equation*}
	|(Id_X-T)(x_j)| = |Id_X-T|(|x_j|) = (Id_X-T)(|x_j|).
\end{equation*}
Since $0 \le S \le T \le I$ then
\begin{equation*}
	|(Id_X-T)(x_j)| = (Id_X-T)(|x_j|) \le (Id_X-S)(|x_j|) = |(Id_X-S)(x_j)|.
\end{equation*}
So,
\begin{equation*}
	\|x_j - T(x_j)\| = \|(Id_X-T)(x_j)\| \le \|(Id_X-S)(x_j)\|
\end{equation*}
for every $j \in \mathbb{N}$, and this,
\begin{equation*}
	\|x_j - T(x_j)\| \rightarrow 0,
\end{equation*}
as $j \rightarrow \infty$. Since, $T$ is weakly Demi Dunford-Pettis then
\begin{equation*}
	|f_j(x_j)| \rightarrow 0,
\end{equation*}
as $j \rightarrow \infty$. Therefore, $S$ is  weakly Demi Dunford-Pettis.
\end{proof}

\begin{remark}
	Let $R, S, T$ operators on a Banach lattice $X$ such that $R \le S \le T \le Id_X + R$
	holds. If $R$ and $T$ are weakly Demi Dunford-Pettis, the operator $S$ is or no weakly Demi Dunford-Pettis. In fact, this problem is equivalent to $0 \le S - R \le T - R \le I$ where the operators $0$ and $T - R$ are  weakly Demi Dunford-Pettis. But, if $R$ and $T$ are  weakly Demi Dunford-Pettis	operators, $T - R$ is not necessary an  weakly Demi Dunford-Pettis operator.
\end{remark}

\begin{corollary}\label{Desig2}
	Let $T, R, S$ be operadors on Banach Lattices $X$ such that $R \le S \le T \le Id_X + R$ holds. If $R$ is a Dunford - Pettis operator and $T$ is a weakly Demi Dunford - Pettis operator then $S$ is a weakly Demi Dunford - Pettis operator.
\end{corollary}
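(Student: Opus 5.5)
The plan is to reduce the corollary to Theorem \ref{Desig} together with the perturbation result Proposition \ref{DDP+DP}, following the hint in the preceding remark. Subtracting $R$ from the chain $R \le S \le T \le Id_X + R$ gives
\begin{equation*}
	0 \le S - R \le T - R \le Id_X .
\end{equation*}
So $S-R$ and $T-R$ are positive operators sandwiched exactly as in the hypothesis of Theorem \ref{Desig}.

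The first step is to show that $T - R$ is weakly Demi Dunford-Pettis. Since $R$ is Dunford-Pettis, so is $-R$. Writing $T - R = T + (-R)$, Proposition \ref{DDP+DP} shows that this Dunford-Pettis perturbation of the weakly Demi Dunford-Pettis operator $T$ is again weakly Demi Dunford-Pettis. This is precisely the point where the remark notes the argument could fail for a general $R$; the Dunford-Pettis assumption on $R$ is what rescues it.

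Next, Theorem \ref{Desig} applies to the pair $S-R \le T-R$ and yields that $S - R$ is weakly Demi Dunford-Pettis. Finally, $S = (S-R) + R$ is a Dunford-Pettis perturbation of a weakly Demi Dunford-Pettis operator, so a second application of Proposition \ref{DDP+DP} gives that $S$ is weakly Demi Dunford-Pettis.

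The only step needing care is the use of Theorem \ref{Desig}, which asks that the operators be positive and that $0 \le S-R \le T-R \le Id_X$ hold in the operator order. Both follow directly from the assumed inequalities, since the order is defined by $(B-A)x \ge 0$ for $x \ge 0$, and that relation is preserved when $R$ is subtracted from every term. I expect no real obstacle beyond this bookkeeping. One should also check that the proof of Proposition \ref{DDP+DP} uses nothing about order: although its statement says ``order weakly demi'', its proof only uses that $T$ is weakly Demi Dunford-Pettis, so it applies verbatim.
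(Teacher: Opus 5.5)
Your proposal is correct and follows the paper's proof step for step. You subtract $R$ to get $0 \le S-R \le T-R \le Id_X$, use Proposition \ref{DDP+DP} to make $T-R$ weakly Demi Dunford-Pettis, apply Theorem \ref{Desig} to $S-R$, and conclude with a second use of Proposition \ref{DDP+DP} on $S=(S-R)+R$. Your side remarks (that $-R$ is Dunford-Pettis, and that the word ``order'' in Proposition \ref{DDP+DP} plays no role in its proof) fill in details the paper leaves implicit.
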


\begin{proof}
	Let $T, R, S$ be operadors on Banach Lattices $X$ such that $R \le S \le T \le Id_X + R$ holds. It's easy to see that problem $R \le S \le T \le Id_X + R$ is equivalent to  problem $0 \le S-R \le T-R \le I$. Since $R$ is a Dunford - Pettis operator follows from Proposition \ref{DDP+DP} that $T-R$ is a weakly Demi Dunford - Pettis operator, and so, from Theorem \ref{Desig} $S-R$ is a weakly Demi Dunford - Pettis operator. 
	
	Then, again from the Proposition \ref{DDP+DP}, since $S-R$ is a weakly Demi Dunford - Pettis operator
	\begin{equation*}
		S = (S-R) + R,
	\end{equation*}
is a weakly Demi Dunford - Pettis operator.
\end{proof}

For our next result, we will need two important concepts: disjointness preserving operators and lattice homomorphisms.

\begin{definition}
	\begin{itemize}
		\item[$(a)$] An operator $T\colon X \rightarrow Y$ betwen two Riesz space is said to preserve disjointness whenever $x\perp y$ in $X$ implies $T(x)\perp T(y)$ in $Y$.
		\item[$(b)$] An operator $T\colon X \rightarrow Y$ betwen two Riesz space is said to be a lattice homomophism whenever $T(x \vee y) = T(x) \vee T(y)$ holds for all $x, y \in X$.
	\end{itemize}
\end{definition}

\begin{remark}
	It is immediately to see that every lattice homomorphism is, in particular, a positive operator, since for every $x \in X^{+}$
	\begin{equation*}
		T(x) = T(x \vee 0) = T(x) \vee T(0) = T(x) \vee 0 = T(x)^{+} \ge 0.
	\end{equation*}
\end{remark}

An interesting piece of information guaranteed by \cite[Theorem $2.14$]{AB85} is that every lattice homomorphism, $T\colon X \rightarrow Y$, in particular, is a disjointness preserving operators, and it holds that
\begin{equation*}
	|T(x)| = T(|x|)
\end{equation*}
for all $x \in X$.

\begin{theorem}
	Let $X$ be a Banach lattice and $S,T\colon X \rightarrow X$ be two positive operators. If $Id_{X} \le S \le T$ such that $T$ is a lattice homomorphism and $S$ is a weakly Demi Dunford-Pettis operator then $T$ is a weakly Demi Dunford-Pettis operator.
\end{theorem}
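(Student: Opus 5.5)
The plan is to mirror the argument of Theorem \ref{Desig}, with the inequalities reversed: show that $\|x_j - T(x_j)\| \to 0$ forces $\|x_j - S(x_j)\| \to 0$, and then invoke the hypothesis that $S$ is weakly Demi Dunford-Pettis. So fix $(x_j)_{j=1}^{\infty}$ in $X$ with $x_j \overset{w}{\rightarrow} 0$ and $(f_j)_{j=1}^{\infty}$ in $X'$ with $f_j \overset{w}{\rightarrow} 0$, and assume $\|x_j - T(x_j)\| \to 0$. The goal is $|f_j(x_j)| \to 0$.

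The first key step is to use that $T$ is a lattice homomorphism and $T \ge Id_X$. Then $T - Id_X \ge 0$, and for $x \ge 0$ we have $T(x) \ge x \ge 0$. I claim
\begin{equation*}
|(T - Id_X)(x_j)| \le (T - Id_X)(|x_j|) = T(|x_j|) - |x_j| = |T(x_j)| - |x_j|.
\end{equation*}
The first inequality holds because $T - Id_X$ is positive. The last equality is $|T(x)| = T(|x|)$, from \cite[Theorem $2.14$]{AB85}.

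The second step, which I expect to be the main obstacle, is the reverse control. We need $(T - Id_X)(|x_j|)$ to be dominated by $|(T - Id_X)(x_j)|$ in norm. Since $T$ is a lattice homomorphism and $Id_X$ is too, I would try to show that $T - Id_X$ behaves like $T$ on the positive and negative parts. Writing $x_j = x_j^+ - x_j^-$, we have $T(x_j^+) \perp T(x_j^-)$ and $x_j^+ \perp x_j^-$. Then I would argue that $(T - Id_X)(x_j^+)$ and $(T - Id_X)(x_j^-)$ are disjoint, using $0 \le (T - Id_X)(x_j^{\pm}) \le T(x_j^{\pm})$ and the disjointness of $T(x_j^+)$ and $T(x_j^-)$. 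Disjointness gives
\begin{equation*}
|(T - Id_X)(x_j)| = (T - Id_X)(x_j^+) + (T - Id_X)(x_j^-) = (T - Id_X)(|x_j|).
\end{equation*}
This is the analogue of the use of \cite[Theorem $3.30$]{AA02} in Theorem \ref{Desig}.

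The third step uses $Id_X \le S \le T$. It gives $0 \le S - Id_X \le T - Id_X$, hence
\begin{equation*}
|(S - Id_X)(x_j)| \le (S - Id_X)(|x_j|) \le (T - Id_X)(|x_j|) = |(T - Id_X)(x_j)|.
\end{equation*}
Taking norms yields $\|x_j - S(x_j)\| \le \|x_j - T(x_j)\| \to 0$. Since $S$ is weakly Demi Dunford-Pettis, it follows that $|f_j(x_j)| \to 0$, so $T$ is weakly Demi Dunford-Pettis.
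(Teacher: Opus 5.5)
Your proof is correct and has the same structure as the paper's proof. Both establish the pointwise chain $|x_j - S(x_j)| \le (S-Id_X)(|x_j|) \le (T-Id_X)(|x_j|) \le |x_j - T(x_j)|$, take norms, and apply the hypothesis on $S$.

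The only difference is how you justify the last inequality.

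\begin{itemize}
\item The paper gets it in one line. It combines the lattice inequality $|a|-|b| \le |a-b|$ with $|T(x)| = T(|x|)$ to obtain $|T(x_j)-x_j| \ge |T(x_j)|-|x_j| = T(|x_j|)-|x_j| = (T-Id_X)(|x_j|)$.
\item You show that $(T-Id_X)(x_j^+)$ and $(T-Id_X)(x_j^-)$ are disjoint, using $0 \le (T-Id_X)(x_j^{\pm}) \le T(x_j^{\pm})$ and $T(x_j^+) \perp T(x_j^-)$. This is valid and proves more: $T-Id_X$ is itself a lattice homomorphism, so the inequality is actually an equality.
\end{itemize}

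The extra strength is not needed, since only $(T-Id_X)(|x_j|) \le |(T-Id_X)(x_j)|$ enters the argument. For the same reason, your Step 1, which proves the opposite inequality, can be dropped.
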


\begin{proof}
	Let $((x_j)_{j=1}^{\infty}, (f_j)_{j=1}^{\infty})$ in $X\times X'$ weakly null sequences with $\|x_j - T(x_j)\| \rightarrow 0$, as $j \rightarrow \infty$. We have to show that $|f_j(x_j)| \rightarrow 0$ as $j \rightarrow \infty$. As $T$ is a lattice homomorphism, we have
	\begin{equation*}
		|(T-Id_{X})(x_j)| = |T(x_j)-x_j| \ge |T(x_j)| - |x_j| = T(|x_j|) - |x_j| = (T-Id_{X})(|x_j|)
	\end{equation*}
for all $j \in \mathbb{N}$. Thus,
\begin{equation*}
	|x_j - S(x_j)|	= |(Id_{X} - S)(x_j)| \le (S-Id_{X})(|x_j|) \le (T-Id_{X})(|x_j|) \le |(T-Id_{X})(x_j)| 
\end{equation*}
for all $j \in \mathbb{N}$. So,
\begin{equation*}
	\|x_j - S(x_j)\| \le \|(T-Id_{X})(x_j)\|
\end{equation*}
for all $j \in \mathbb{N}$. Like this,
\begin{equation*}
	\|x_j - S(x_j)\| \rightarrow 0
\end{equation*}
as $j \rightarrow \infty$. As $S$ is a is a weakly Demi Dunford-Pettis operator then
\begin{equation*}
	|f_j(x_j)| \rightarrow 0.
\end{equation*}
\end{proof}

\end{document}